\theoremstyle{plain}
\newtheorem{theorem}{Theorem} 
\newtheorem{lemma}[theorem]{Lemma}
\theoremstyle{definition}
\newtheorem{algorithm}[theorem]{Algorithm}
\theoremstyle{remark}
\newtheorem{remark}[theorem]{Remark}
\providecommand{\keywords}[1]{
  \small	
  \textbf{\textit{Keywords---}} #1}
\providecommand{\msc}[1]{
  \small	
  \textbf{\textit{2020 AMS Mathematics Subject Classification---}} #1}
\DeclareMathOperator*{\argmin}{arg\,min}
\DeclareMathOperator{\prox}{prox}
\DeclareMathOperator{\conv}{conv}
\DeclareMathOperator{\TV}{TV}
\DeclareMathOperator{\proj}{proj}
\newcommand{\N}{\mathbb N}
\newcommand{\R}{\mathbb R}
\newcommand{\Fun}[1]{\mathcal{#1}}
\newcommand{\FF}{\Fun F}
\newcommand{\FK}{\Fun K}
\newcommand{\FQ}{\Fun Q}
\newcommand{\FL}{\Fun L}
\newcommand{\Mat}[1]{\bm{#1}}
\newcommand{\MA}{\Mat A}
\newcommand{\ME}{\Mat E}
\newcommand{\MI}{\Mat I}
\newcommand{\ML}{\Mat L}
\newcommand{\MQ}{\Mat Q}
\newcommand{\MU}{\Mat U}
\newcommand{\MV}{\Mat V}
\newcommand{\MX}{\Mat X}
\newcommand{\MY}{\Mat Y}
\newcommand{\MZ}{\Mat Z}
\newcommand{\Mzero}{\Mat 0}
\newcommand{\Mone}{\Mat 1}
\newcommand{\Vek}[1]{\bm{#1}}
\newcommand{\Vx}{\Vek x}
\newcommand{\Vy}{\Vek y}
\newcommand{\Vz}{\Vek z}
\newcommand{\Vu}{\Vek u}
\newcommand{\Vxi}{\Vek \xi}
\newcommand{\Veta}{\Vek \eta}
\newcommand{\sphere}{\mathbb S}
\newcommand{\Binary}{\mathbb B}
\newcommand{\Ball}{\mathbb C}
\newcommand{\stiefel}{\mathbb V}
\newcommand{\SO}{{\mathrm{SO}}}
\newcommand{\PSD}{{\mathbb{P}}}
\newcommand{\tF}{\mathrm{F}}
\newcommand{\rk}{\mathrm{rk}}
\begin{document}

\title{Denoising Multi-Color QR Codes \\
and Stiefel-Valued Data \\
by Relaxed Regularizations}

\author{Robert Beinert\thanks{R. Beinert is with the Institute of Mathematics,
	Technische Universit\"at Berlin, Stra\ss{}e des 17. Juni 136,
        10623 Berlin, Germany.}
        \qquad
        Jonas Bresch\thanks{J. Bresch is with the Institute of Mathematics,
	Technische Universit\"at Berlin, Stra\ss{}e des 17. Juni 136,
        10623 Berlin, Germany.}
}

\maketitle

\begin{abstract}
    The handling of manifold-valued data,
    for instance,
    plays a central role in color restoration tasks
    relying on circle- or sphere-valued color models,
    in the study of rotational or directional information
    related to the special orthogonal group,
    and in Gaussian image processing,
    where the pixel statistics are interpreted as
    values on the hyperbolic sheet.
    Especially,
    to denoise these kind of data,
    there have been proposed several 
    generalizations of total variation (TV)
    and Tikhonov-type denoising models
    incorporating the underlying manifolds.
    Recently,
    a novel, numerically efficient denoising approach
    has been introduced,
    where the data are embedded in an Euclidean ambient space,
    the non-convex manifolds are encoded by a series
    of positive semi-definite, fixed-rank matrices,
    and the rank constraint is relaxed to obtain
    a convexification 
    that can be solved using standard algorithms
    from convex analysis.
    The aim of the present paper is to extent this approach
    to new kinds of data
    like multi-binary and Stiefel-valued data.
    Multi-binary data can, for instance, be used to model multi-color QR codes
    whereas Stiefel-valued data occur in image and video-based recognition.
    For both new data types,
    we propose TV- and Tikhonov-based denoising models
    together with easy-to-solve convexification. 
    All derived methods are evaluated on
    proof-of-concept, synthetic experiments.

\end{abstract}

\keywords{Denoising of multi-binary valued data,
    manifold-valued data,
    signal and image processing on graphs,
    total variation,
    Tikhonov regularization,
    convex relaxation.}

\hspace{0.25cm}

\msc{94A08, 94A12, 65J22, 90C22, 90C25}

\section{Introduction}
\label{sec:intro}

Due to novel emerging acquisition techniques,
data processing on manifold-valued signals and images 
becomes more and more important. 
Especially,
when the data manifold is non-convex,
classical data processing tasks like denoising
turn into challenging novel problems.
In real-world applications,
data with values on non-convex manifolds,
for instance,
play a major role    
in color restoration tasks relying on the HSV (hue-saturation-value)
or LCh (lightness-chromaticity-hue) color space \cite{NikSte14,LEHS2008},
where the hue is represented by a circle,
and the color component is thus interpreted as circle-valued data.
For denoising tasks relying on the
chromaticity-brightness model \cite{PPS2017,QKL2010},
the pixels of an image instead attain values on the sphere.
Rotation and directional information are modeled using the special orthogonal group;
hence the corresponding data are often $\SO(2)$- or $\SO(3)$-valued \cite{ASWK1993,BHJPSW10}.
For Gaussian image processing, 
mean and variance of the pixel statistic can be interpreted as points on the hyperbolic sheet
yielding hyperbolic-valued data \cite{BerPerSte16,L97}.
To deal with data of these kinds,
especially in the context of denoising,
the classic variational Tikhonov and total variation (TV) models
have been generalized 
relying on non-convex optimization methods on the underlying manifolds
\cite{LSKC13,CS13,LauNikPerSte17,BerLauSteWei14,Ken23,BerChaHiePerSte16,GS14}.
All these methods come with some limitations,
they either increase the dimension using lifting techniques,
require a tree structure of the graph where the data is living on,
or are based on a subclass of the considered manifolds.
A completely novel approach is proposed
in \cite{BeBr24,BeBr25,BeBrSt25},
where the manifold-valued data are embedded into an Euclidean ambient space
allowing the use of standard denoising models.
The non-convex side constrains caused by the non-convex manifolds
are moreover encoded by a series of positive semi-definite, fixed-rank matrices.
Relaxing the fixed rank results in a convexification of the denoiser model
that may be solved using standard algorithms from convex analysis.

The aim of this paper is to extend the proposed encoding and convexification techniques
in \cite{BeBr24,BeBr25,BeBrSt25}
to new types of data,
namely multi-binary
and Stiefel-valued data.
Multi-binary signals with values in $\Binary_d = \{-1,1\}^d \subset \R^d$, 
occur in the context of multi-color QR codes
for instance. 
The main idea behind multi-color QR codes is
to increase the information density and information capacity
by combining three independent black/white QR codes
with respect to the color channels in the RGB (red-blue-green) color model
\cite{Ta18,Mi16,An24}.
The modules of the resulting QR codes are colored in black, red, blue, green, cyan, magenta, yellow, and white.
Since these colors correspond to the vertices of the RGB color space,
rescaling and shifting the color space allow us
to interpret multi-color QR codes as $\Binary_3$-valued images.
Notice that $\Binary_d$ is actually no manifold.
Nevertheless,
the TV methods proposed in \cite{BeBr24} can be generalized 
to handle this kind of data.
The Stiefel manifold $\stiefel_d(k)$ consists of all $k$-tuples
of orthonormal vectors in $\R^d$;
thus the Stiefel manifold figuratively consists
of all orthogonal bases of all $k$-dimensional subspaces of $\R^d$,
where the order of the basis vectors matter.
The Stiefel manifold arises in 
image and video-based recognition \cite{TVSC2011}.
The handling orthonormal constraints is, for instance, studied \cite{WY2013,EAS1998},
where geodesic- and lifting processes play a major part. 
In the following,
we consider general signals supported 
on a connected, undirected graph $G = (V, E)$,
where $V \coloneqq \{1, \dots, N\}$ is the set of vertices
and $E \coloneqq \{(n,m) : n < m\} \subset V \times V$ the set of edges,
encoding the structure of the data.
We set $M \coloneqq |E|$ to be the number of edges.

This paper is organized as follows:
In §~\ref{sec:tv_binary_valued_problem}
we derive a convex relaxation for a $\Binary_d$-valued TV denoising model
and a numerical solver based on the Alternating Direction Method of Multipliers (ADMM).
In Theorem~\ref{thm:binary_prob_tightness_tv},
we show that our convexification is actually tight,
meaning that
every solution of the convexified model can be used 
to easily construct a solution of the original non-convex model.
The denoising of $\stiefel_d(k)$-valued data is studied in §~\ref{sec:stiefel},
where we propose a TV model for denoising cartoon-like or piecewise constant data
and a Tikhonov model for denoising smooth data.
For both variational denoising models,
we again derive numerical solvers relying on the ADMM.
In §~\ref{sec:num},
we show the success for our proposed methods 
by applying the resulting algorithms 
to denoise multi-color QR codes
and Stiefel-valued signals.

\section{TV Denoising for Multi-Binary Data}
\label{sec:tv_binary_valued_problem}

The aim of this section is to generalize the TV-denoising model
for binary data in \cite{BeBr24} to multi-binary data,
which, 
for instance, 
occurs in the context of multi-color QR codes.
More precisely,
multi-binary data takes on values in $\Binary_d \coloneqq \{-1,1\}^d \subset \R^d$.
For a graph $G \coloneqq (V,E)$ as introduced in §~\ref{sec:intro},
we are interested in 
restoring a multi-binary signal $\Vx \coloneqq (\Vx_n)_{n \in V} \subset \Binary_d$
from noisy measurement $\Vy \coloneqq (\Vy_n)_{n \in V} \subset \R^d$.
Taking the piecewise constant or cartoon-like structure of multi-color QR codes into account,
we introduce the following TV denoiser:
\begin{align}
    \label{eq:tv_binary_non_convex}
    \argmin_{\Vx \in \Binary_d^N} 
    \; 
    \tfrac{1}{2} \sum_{n \in V} \|\Vx_n - \Vy_n\|^2_2 
    + \lambda \TV(\Vx),
    \quad 
    \text{where}
    \quad
    \TV(\Vx) \coloneqq \sum_{(n,m) \in E} \|\Vx_n - \Vx_m\|_1,
\end{align}
which relies on an anisotropic TV regularization on the ambient space $\R^d$,
and where $\lambda$ is a positive regularization parameter.
Since the domain $\Binary_d^N$ is non-convex,
we thus have to solve a non-convex minimization problem.
To convexify \eqref{eq:tv_binary_non_convex},
we first exploit 
that $\lVert \Vx_n\rVert^2 = 1$
and that $\lVert \Vy_n \rVert^2$ is fixed by the given data;
therefore,
we have 
$\tfrac{1}{2}\|\Vx_n - \Vy_n\|_2^2
= -\langle \Vx_n, \Vy_n\rangle + \text{constant}$
such that the objective in \eqref{eq:tv_binary_non_convex}
can be replaced by
\begin{align*}
    \FK(\Vx)
    \coloneqq 
    -\sum_{n \in V} \; \langle \Vx_n, \Vy_n\rangle + \lambda \TV(\Vx).
\end{align*}
Second,
we convexify the domain of \eqref{eq:tv_binary_non_convex} 
via replacing $\Binary_d$ by its convex hull  
$\Ball_d \coloneqq \conv(\Binary_d) = [-1,1]^d$,
which in fact yields a cube in $\R^d$.
Instead of solving \eqref{eq:tv_binary_non_convex},
we propose to solve the \textbf{convexified TV model}:
\begin{align}
    \label{eq:tv_binary_convex}
    \argmin_{\Vx \in \R^{d\times N}} 
    \quad
    \FK(\Vx)
    \quad 
    \text{s.t.}
    \quad 
    \Vx_n \in \Ball_d
    \quad 
    \forall
    n \in V.
\end{align}
Heuristically,
the term $-\langle \Vx_n, \Vy_n \rangle$ in the objective $\FK$
pushes $\Vx_n$ in direction $\Vy_n$
until it hits the boundary
\begin{equation*}
    \partial \Ball_d = \{\Vxi \in \Ball_d : \exists i \in \{1,...,d\} \;\text{s.t.}\; |\Vxi_i| = 1 \}
\end{equation*}
and, in the best case, a corner in $\Binary_d$.

\paragraph{Tightness of the Convexified TV Model}

In analogy to binary signals \cite{BeBr24},
the convexification \eqref{eq:tv_binary_convex} is in fact tight.
This means that
every solution of \eqref{eq:tv_binary_convex} 
can be used to obtain a solution of the original non-convex problem 
\eqref{eq:tv_binary_non_convex}.
For the actual transference of $\Vx_n = (\Vx_{n,1}, \dots, \Vx_{n,d})^* \in \R^d$,
we define $X_{\Veta} \colon \R^{d} \to \Binary_d^N$ 
with $\Veta = (\Veta_1, \dots, \Veta_d)^* \in \Ball_d$ via
\begin{align*}
    X_{\Veta}(\Vx_n)
    \coloneqq 
    (\chi_{\Veta_i}(\Vx_{n,i}))_{i = 1}^d
    \quad\text{with}\quad
    \chi_{\Veta_i}(\Vx_{n,i})
    \coloneqq 
    \begin{cases}
    1 & \text{if } \Vx_{n,i} > \Veta_i, \\
    -1 & \text{if } \Vx_{n,i} \le \Veta_i. \\
    \end{cases}
\end{align*}
Moreover,
we introduce $X_{\Veta}(\Vx) \coloneqq (X_{\Veta}(\Vx_n))_{n \in V}$
to transfer the entire signal.
For the characteristic $X$,
we obtain the following generalized version of \cite[Lem.~3.1]{BeBr24},
which relates to the so-called coarea formula \cite{Fed59,FleRis60}.

\begin{lemma}
\label{lem:coarea}
    For $\Vx_n, \Vx_m \in \Ball_d$,
    it holds
    \begin{equation*}
        \|\Vx_n - \Vx_m\|_1 
        = \frac{1}{2^d}\int_{\Ball_d} \|X_{\Veta}(\Vx_n) - X_{\Veta}(\Vx_m)\|_1 \;\mathrm{d} \Veta.
    \end{equation*}
\end{lemma}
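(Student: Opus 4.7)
The plan is to reduce the $d$-dimensional statement to a coordinate-wise one via Fubini and then to invoke the scalar version, which is essentially \cite[Lem.~3.1]{BeBr24}. Since the anisotropic $\ell^1$ norm splits as
\begin{equation*}
    \|X_{\Veta}(\Vx_n) - X_{\Veta}(\Vx_m)\|_1
    = \sum_{i=1}^d \bigl|\chi_{\Veta_i}(\Vx_{n,i}) - \chi_{\Veta_i}(\Vx_{m,i})\bigr|,
\end{equation*}
and since the $i$-th summand depends only on the coordinate $\Veta_i$ of the integration variable, Fubini turns the integral over $\Ball_d = [-1,1]^d$ into a product. Each of the $d-1$ trivial one-dimensional integrals over a coordinate $\Veta_j$ with $j\neq i$ contributes a factor of $2$, so that
\begin{equation*}
    \int_{\Ball_d} \bigl|\chi_{\Veta_i}(\Vx_{n,i}) - \chi_{\Veta_i}(\Vx_{m,i})\bigr|\, \mathrm{d}\Veta
    = 2^{d-1} \int_{-1}^{1} \bigl|\chi_{t}(\Vx_{n,i}) - \chi_{t}(\Vx_{m,i})\bigr|\, \mathrm{d}t.
\end{equation*}

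The remaining scalar integral is then evaluated directly. For $a,b \in [-1,1]$ with, say, $a \le b$, the thresholded values $\chi_t(a)$ and $\chi_t(b)$ coincide outside the interval $(a,b]$ and differ by exactly $2$ on it. Hence
\begin{equation*}
    \int_{-1}^{1} \bigl|\chi_t(a) - \chi_t(b)\bigr|\, \mathrm{d}t
    = 2\,|a-b|,
\end{equation*}
which is the content of the one-dimensional coarea-type identity already recorded in \cite[Lem.~3.1]{BeBr24}. The case $a > b$ is symmetric; the single boundary value $t=a$ where the strict versus non-strict inequality in the definition of $\chi_t$ matters has measure zero and is irrelevant.

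Combining these two steps, the $i$-th contribution evaluates to $2^{d-1}\cdot 2\,|\Vx_{n,i} - \Vx_{m,i}| = 2^d\,|\Vx_{n,i} - \Vx_{m,i}|$. Summing over $i = 1,\dots,d$ and multiplying by the prefactor $2^{-d}$ reproduces exactly $\sum_{i=1}^d |\Vx_{n,i} - \Vx_{m,i}| = \|\Vx_n - \Vx_m\|_1$, which is the claim. There is no real obstacle here beyond the bookkeeping of the constants; the key observation is that the anisotropic structure of both the $\ell^1$-norm on $\R^d$ and the coordinate-wise threshold $X_\Veta$ allow a full decoupling across coordinates, so that the multi-binary formula follows tensor-wise from the binary one.
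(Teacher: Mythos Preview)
Your proof is correct and follows essentially the same route as the paper: decouple the anisotropic $\ell^1$-norm coordinatewise, use Fubini to reduce the integral over $\Ball_d$ to a one-dimensional integral (picking up the factor $2^{d-1}$), and evaluate the scalar case directly. The paper abbreviates by only writing out the case $\Vx_{m,i} < \Vx_{n,i}$ for all $i$, whereas you spell out the general scalar case and the measure-zero boundary, but the argument is the same.
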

\begin{proof}
    For simplicity,
    we only consider the case
    $\Vx_{m,i} < \Vx_{n,i}$
    for all $i \in \{1,...,d\}$
    and obtain by Fubini's theorem
    \begin{equation*}
        \int_{\Ball_d} \|X_{\Veta}(\Vx_n) - X_{\Veta}(\Vx_m)\|_1 \;\mathrm{d} \Veta
        =
        2^{d-1}
        \sum_{i = 1}^d
        \int_{\Vx_{m,i}}^{\Vx_{n,i}} \lvert 1 - (-1)\rvert \;\mathrm{d}\Veta_i
        = 2^d \sum_{i = 1}^d \lvert \Vx_{m,i} - \Vx_{n,i}\rvert
        = 2^d\lVert\Vx_n - \Vx_m\rVert_1.
    \end{equation*}
    The general case follows analogous.
\end{proof}

\begin{theorem}
\label{thm:binary_prob_tightness_tv}
    Let $\Vx^* \in \Ball_d^{N}$
    be a solution of \eqref{eq:tv_binary_convex}.
    Then $X_{\Veta}(\Vx^*) \in \Binary_d^N$ is a solution of \eqref{eq:tv_binary_non_convex}
    for almost all $\Veta \in \Ball_d$.
\end{theorem}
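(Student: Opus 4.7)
The plan is to upgrade Lemma~\ref{lem:coarea} from the TV part to the full objective $\FK$, so that $\FK(\Vx)$ itself becomes the average of $\FK(X_\Veta(\Vx))$ over $\Veta \in \Ball_d$. Once this identity is in place, the tightness claim follows from a short averaging argument combined with the inclusion $\Binary_d^N \subset \Ball_d^N$.

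First I would derive a coarea-type formula for the linear part $-\langle \Vx_n, \Vy_n\rangle$. Componentwise, for $\Vx_{n,i} \in [-1,1]$, a direct computation gives
\begin{equation*}
    \int_{-1}^{1} \chi_{\Veta_i}(\Vx_{n,i}) \,\mathrm{d}\Veta_i
    = (1 + \Vx_{n,i}) \cdot 1 + (1 - \Vx_{n,i}) \cdot (-1)
    = 2 \Vx_{n,i},
\end{equation*}
so integrating over the remaining $d-1$ coordinates of $\Ball_d$ yields $\Vx_{n,i} = \frac{1}{2^d} \int_{\Ball_d} \chi_{\Veta_i}(\Vx_{n,i}) \,\mathrm{d}\Veta$. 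Summing against $\Vy_{n,i}$ then produces $\langle \Vx_n, \Vy_n\rangle = \frac{1}{2^d} \int_{\Ball_d} \langle X_{\Veta}(\Vx_n), \Vy_n\rangle \,\mathrm{d}\Veta$. Combining this identity with Lemma~\ref{lem:coarea} applied edgewise to the TV term, one obtains the key representation
\begin{equation*}
    \FK(\Vx) = \frac{1}{2^d} \int_{\Ball_d} \FK(X_{\Veta}(\Vx)) \,\mathrm{d}\Veta
    \quad \text{for all } \Vx \in \Ball_d^N.
\end{equation*}

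With this at hand, let $\Vx^*$ be a minimizer of \eqref{eq:tv_binary_convex}. Since $X_{\Veta}(\Vx^*) \in \Binary_d^N \subset \Ball_d^N$ is feasible for the convex problem for every $\Veta$, optimality of $\Vx^*$ yields $\FK(\Vx^*) \le \FK(X_{\Veta}(\Vx^*))$ pointwise in $\Veta$. On the other hand, the averaged identity forces $\FK(\Vx^*) = \frac{1}{2^d} \int_{\Ball_d} \FK(X_{\Veta}(\Vx^*)) \,\mathrm{d}\Veta$, so the integrand must equal its (pointwise) lower bound almost everywhere, giving $\FK(X_{\Veta}(\Vx^*)) = \FK(\Vx^*)$ for a.e.\ $\Veta \in \Ball_d$. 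Finally, for any competitor $\Vz \in \Binary_d^N$ of the non-convex problem \eqref{eq:tv_binary_non_convex}, $X_{\Veta}(\Vz) = \Vz$ for a.e.\ $\Veta$ (only the null set $\bigcup_i \{\Veta_i = \pm 1\}$ is excluded), so the averaging identity also gives $\FK(\Vz) = \FK(\Vx^*) \cdot \text{(nothing)}$ — more precisely $\FK(\Vz) \ge \FK(\Vx^*) = \FK(X_{\Veta}(\Vx^*))$, which says $X_{\Veta}(\Vx^*)$ attains the non-convex minimum.

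The only non-routine step is the integral representation of the data term, essentially a one-dimensional coarea identity; once that is observed, the rest is the standard convex-relaxation-is-tight template and should go through cleanly. The boundary cases $\Vx_{n,i} = \pm 1$ that were hidden in the Fubini step of Lemma~\ref{lem:coarea} only contribute a null set in $\Veta$ and therefore do not affect the almost-everywhere conclusion.
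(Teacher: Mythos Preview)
Your proof is correct and follows essentially the same route as the paper: both establish the integral representation $\langle \Vx_n,\Vy_n\rangle = 2^{-d}\int_{\Ball_d}\langle X_{\Veta}(\Vx_n),\Vy_n\rangle\,\mathrm d\Veta$ by the one-dimensional computation, combine it with Lemma~\ref{lem:coarea} to get $\FK(\Vx^*) = 2^{-d}\int_{\Ball_d}\FK(X_{\Veta}(\Vx^*))\,\mathrm d\Veta$, and then conclude via the averaging argument using $\Binary_d^N\subset\Ball_d^N$. Your exposition of the final step is in fact slightly more explicit than the paper's; the detour through $X_{\Veta}(\Vz)=\Vz$ and the ``$\cdot\text{(nothing)}$'' artifact are unnecessary and should be cleaned up, but they do not affect correctness.
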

\begin{proof}
    The proof follows ideas from \cite{CHOGENOB2011,BeBr24}. 
    For any $\Vx_n \in \Ball_d^{N}$
    and $\Vy_n \in \R^d$,
    we have the integral representation:
    \begin{align*}
        \frac{1}{2^d}
        \int_{\Ball_d^N}
        \langle X_{\Veta}(\Vx_n), \Vy_n \rangle
        \;\mathrm{d}\Veta 
        &= 
        \frac{1}{2}
        \sum_{i = 1}^d 
        \int_{-1}^1
        X_{\Veta_i}(\Vx_{n,i}) \, \Vy_{n,i}
        \;\mathrm{d}\Veta_i 
        = 
        \frac{1}{2}
        \sum_{i = 1}^d 
        \biggl[
        \int_{-1}^{\Vx_{n,i}}
        \Vy_{n,i} \;\mathrm d \Veta_i
        - 
        \int_{\Vx_{n,i}}^{1}
        \Vy_{n,i} \;\mathrm d \Veta_i\biggr] \\
        &=
        \frac{1}{2}
        \sum_{i = 1}^d 
        [\Vx_{n,i} - (-1) - (1 - \Vx_{n,i})] \; \Vy_{n,i}
        = 
        \langle \Vx_n, \Vy_n \rangle.
    \end{align*}
    Together with Lem.~\ref{lem:coarea},
    this yields
    \begin{align*}
        \FK(\Vx^*)
        &=
        -\smashoperator{\sum_{n \in V}} \langle \Vx_n^*, \Vy_n\rangle 
        + \lambda \smashoperator{\sum_{(n,m) \in E}} \|\Vx_n^* - \Vx_m^*\|_1   
        \\
        &= \frac{1}{2^d}
        \int_{\Ball_d} \bigg[ -\smashoperator{\sum_{n \in V}}\langle X_{\Veta}(\Vx_n^*),\Vy_n\rangle
        + \lambda \smashoperator{\sum_{(n,m) \in E}} \|X_{\Veta}(\Vx_n^*) - X_{\Veta}(\Vx_m^*)\|_1 
        \biggr]\; \mathrm{d} \Veta 
        = \frac{1}{2^d}\int_{\Ball_d} \FK(X_{\Veta}(\Vx^*))\;\mathrm{d} \Veta.
    \end{align*}
    Since $\FK(\Vx^{**}) \ge \FK(\Vx^*)$ 
    for any solution $\Vx^{**} \in \Binary_d^N$ of \eqref{eq:tv_binary_non_convex}, 
    $X_{\Veta}(\Vx^*)$ minimizes \eqref{eq:tv_binary_non_convex}
    for almost all $\Veta \in \Ball_d$ as well.
\end{proof}

\paragraph{Solving the Convexified TV Model}

The convex problem \eqref{eq:tv_binary_convex}
may be solved employing standard methods from convex analysis.
Similarly to \cite[§~3]{BeBr24},
we rely on the so-called Alternating Direction Method of Multipliers (ADMM) \cite{bauschke2017}.
For this,
we consider the splitting
$\FK(\Vx) + \iota_{\Ball_d}(\Vu)$ with $\Vx - \Vu = 0$.
Here,
$\iota_{\Ball_d} \equiv 0$ on $\Ball_d$ 
and $\iota_{\Ball_d} \equiv + \infty$ elsewhere.
The proximal mapping of a general functional $\FF\colon \R^{d \times N} \to \R$
and the projection onto a closed set $A \subset \R^{d\times N}$ are defined as
\begin{equation}
    \label{eq:prox-proj}
    \prox_{\FF, \gamma}(\Vz) 
    \coloneqq 
    \argmin_{\Vx \in \R^{d \times N}} \Bigl\{\FF(\Vx) + \tfrac{1}{2\gamma} \sum_{n \in V} \lVert \Vx_n - \Vz_n\rVert^2 \Bigr\}
    \quad\text{and}\quad
    \proj_A(\Vz) 
    \coloneqq
    \argmin_{\Vx \in A} \sum_{n \in V} \lVert \Vx_n - \Vz_n \rVert^2.
\end{equation}
Exploiting the computation rule for the proximal mapping,
we obtain Algorithm~\ref{alg:1}.

\begin{algorithm}{ADMM to solve \eqref{eq:tv_binary_convex}.}
    \label{alg:1}
    Choose $\Vx^{(0)} = \Vu^{(0)} = \Vz^{(0)}= \Mzero \in \R^N$, 
    step size $\rho > 0$,
    and regularization parameter $\lambda > 0$
    \\
    \textbf{For} $i \in \N$ \textbf{do:}\\
    \hspace*{0.5cm} 
    $\Vx^{(i+1)}
        = \prox_{\FK, \frac{1}{\rho}}(\Vu^{(i)} - \Vz^{(i)})
        = \prox_{\TV, \frac{\lambda}{\rho}}(\Vu^{(i)} - \Vz^{(i)} + \frac{1}{\rho} \,\Vy)$\\
    \hspace*{0.5cm} 
    $\Vu^{(i+1)} 
    = \prox_{\Ball_d^N, \frac{1}{\rho}}(\Vx_n^{(i+1)} + \Vz_n^{(i)})
    = \proj_{\Ball_d^N}(\Vx_n^{(i+1)} + \Vz_n^{(i)})$ \\
    \hspace*{0.5cm} $\Vz^{(i+1)} 
    = \Vz^{(i)} + \Vx^{(i+1)} - \Vu^{(i+1)}$
\end{algorithm}

\begin{remark}
    The numerical projection to $\Ball_d^N$ is unproblematic.
    Since we rely on an anisotropic TV regularization,
    the TV proximal mapping can be efficiently computed by
    applying the fast TV program \cite{Con13v4} coordinatewise.
    The convergence of Algorithm~\ref{alg:1} is guaranteed 
    by \cite[Cor.~28.3]{bauschke2017}.
\end{remark}

\section{Denoising Stiefel-Valued Data}
\label{sec:stiefel}

The (real) Stiefel-manifold $\stiefel_d(k)$ is
the union of all $k$-tuples of orthonormal vectors in $\R^d$
with respect to the Euclidean inner product.
In other words, 
$\stiefel_d(k)$ consists of all orthonormal bases
of all $k$-dimensional subspaces,
where the order of a basis matter.
Henceforth,
we parametrize the Stiefel manifold as
\begin{align*}
    \stiefel_d(k) \coloneqq \{\MX \in \R^{d \times k} : \MX^*\MX = \MI_k\},
\end{align*}
where $\MI_k$ denotes the identity in $\R^{k \times k}$,
and where the columns correspond to the basis vectors.
We equip $\R^{d \times k}$ with the Euclidean geometry,
i.e., with
$\smash{\|\MX\|_{\tF} \coloneqq \sum_{j=1}^k \|\Vx_{j}\|^2}$
and
$\smash{\langle \MX, \MY \rangle_{\tF} 
\coloneqq \sum_{j=1}^k \langle \Vx_{j}, \Vy_j \rangle}$
where $\MX = [\Vx_1 | \dots |\Vx_k], 
\MY = [\Vy_1 | \dots |\Vy_k] \in \R^{d \times k}$.
Moreover,
we employ the 1-1-norm
$\|\MX\|_{1,1} \coloneqq \sum_{j=1}^k \|\Vx_{k}\|_1$,
the spectral norm $\lVert \MX \rVert_2$,
and the rank $\rk(\MX)$.
In order to denoise Stiefel-valued data,
we use similar techniques as introduced in \cite{BeBrSt25,BeBr24}.
More precisely,
we propose a TV model for cartoon-like signals
and a Tikhonov model for smooth signals.
For the special case $\stiefel_d(1) \simeq \sphere_{d-1}$,
the proposed denoisers coincide with those in \cite{BeBrSt25,BeBr24}.

\subsection{TV Denoiser for Stiefel-Valued Data}
\label{sec:tv_stiefel_valued_problem}

To restore piecewise constant
Stiefel-valued data $\MX \coloneqq (\MX_n)_{n \in V} \in (\stiefel_d(k))^N$ 
from noisy measurements 
$\MY \coloneqq (\MY_n)_{n \in V} \in (\R^{d \times k})^N$,
we consider the non-convex TV model:
\begin{equation}
    \label{eq:tv_stiefel_non_convex}
    \argmin_{\MX \in \stiefel_d^N(k)}
    \; 
    \tfrac{1}{2} \sum_{n \in V} \|\MX_n - \MY_n\|^2_{\tF}
    + \lambda \TV(\MX),
    \quad 
    \text{where}
    \quad 
    \TV(\MX) 
    \coloneqq 
    \sum_{(n,m) \in E}
    \|\MX_n - \MX_m\|_{1,1},
\end{equation}
which relies on an anisotropic TV regularization 
on the ambient space $\R^{d \times k}$.
In analogy to §~\ref{sec:tv_binary_valued_problem},
using 
$\frac{1}{2} \lVert \MX_n - \MY_n\rVert_{\tF}^2
= - \langle \MX_n, \MY_n \rangle_{\tF} + \text{constant}$, 
the objective may be replaced by
\begin{equation*}
    \FK(\MX)
    \coloneqq 
    - \sum_{n \in V} \langle \MX_n, \MY_n\rangle_{\tF}
    + \lambda \TV(\MX).
\end{equation*}
For confexifying \eqref{eq:tv_stiefel_non_convex},
we encode the non-convex domain
by a series of positive semi-definite, fixed-rank matrices,
where the rank constraint is relaxed afterwards.
This procedure is inspired by similar considerations 
for hyperbolic-valued data \cite{BeBr25}.

\begin{lemma}
    \label{lem:vertice_stiefel}
    Let $n \in V$ and $\MX_n \in \R^{d \times k}$. 
    Then
    $\MX_n \in \stiefel_d(k)$
    if and only if 
    $\MV_n \coloneqq \left[\begin{smallmatrix}
            \MI_d & \MX_n \\
            \MX_n^* & \MI_k
        \end{smallmatrix}\right] \succeq \Mzero$
    and $\rk(\MV_n) = d$.
\end{lemma}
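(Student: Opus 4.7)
The plan is to read off both the semi-definiteness and the rank of the block matrix $\MV_n$ via the Schur complement with respect to the upper-left block. Since this upper-left block is the identity $\MI_d \succ \Mzero$, it is in particular invertible, so the standard Schur complement theory applies without any regularity assumptions, and both conditions on $\MV_n$ translate into tractable conditions on $\MI_k - \MX_n^* \MX_n$.

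First, I would invoke the Schur-complement characterization of positive semi-definiteness for block matrices: since $\MI_d \succ \Mzero$, one has $\MV_n \succeq \Mzero$ if and only if the Schur complement
\begin{equation*}
    \MI_k - \MX_n^* \MI_d^{-1} \MX_n
    \;=\; \MI_k - \MX_n^*\MX_n
\end{equation*}
is positive semi-definite, i.e., if and only if all singular values of $\MX_n$ are bounded by one. Second, I would use the rank formula for block matrices with an invertible diagonal block, namely $\rk(\MV_n) = \rk(\MI_d) + \rk(\MI_k - \MX_n^*\MX_n) = d + \rk(\MI_k - \MX_n^*\MX_n)$. Consequently, the rank condition $\rk(\MV_n) = d$ is equivalent to $\rk(\MI_k - \MX_n^*\MX_n) = 0$, i.e., $\MX_n^*\MX_n = \MI_k$.

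Combining the two observations, the joint conditions $\MV_n \succeq \Mzero$ and $\rk(\MV_n) = d$ force $\MX_n^*\MX_n = \MI_k$, which is precisely the defining equation of $\stiefel_d(k)$. Conversely, if $\MX_n \in \stiefel_d(k)$, then $\MI_k - \MX_n^*\MX_n = \Mzero$ is trivially PSD of rank zero, so the Schur-complement criterion yields $\MV_n \succeq \Mzero$ and the rank formula yields $\rk(\MV_n) = d$. There is no real obstacle here; the only point worth stressing is that, in the presence of the PSD constraint, the rank condition already pins down $\MX_n^*\MX_n = \MI_k$ on the nose, so the two conditions collapse cleanly to the Stiefel equation without leaving any slack.
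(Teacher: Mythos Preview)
Your argument is correct and complete: with the invertible upper-left block $\MI_d$, the Schur complement simultaneously controls both the semi-definiteness and the rank of $\MV_n$, and the two conditions collapse exactly to $\MX_n^*\MX_n = \MI_k$. The paper, however, takes a different route for this lemma: it argues directly that if $\rk(\MV_n)=d$ then the last $k$ rows lie in the span of the first $d$ (which are independent because of the $\MI_d$ block), forcing the factorization $\MV_n = \left[\begin{smallmatrix}\MI_d\\ \MX_n^*\end{smallmatrix}\right]\left[\begin{smallmatrix}\MI_d & \MX_n\end{smallmatrix}\right]$ and hence $\MX_n^*\MX_n=\MI_k$; the converse is read off from the same factorization. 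Your Schur-complement treatment is more systematic and, in fact, anticipates the paper's next lemma, where exactly this computation is used to characterize $\MV_n\succeq\Mzero$ alone. The paper's factorization argument is slightly more elementary (no external Schur theory needed) and makes the rank-$d$ Gram structure of $\MV_n$ explicit, which is the conceptual point behind the encoding; your approach has the advantage of handling the PSD and rank constraints through a single uniform mechanism.
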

\begin{proof}
    If $\rk(\MV_n) = d$,
    then the last $k$ rows of $\MV_n$
    can be written as linear combinations
    of the independent first $d$ rows.
    Based on this observation,
    we obtain the (positive semi-definite) low-rank representation
    \begin{equation*}
        \MV_n 
        = \left[\begin{smallmatrix}
            \MI_d \\
            \MX_n^*
        \end{smallmatrix}\right]
        \left[\begin{smallmatrix}
            \MI_d & \MX_n
        \end{smallmatrix}\right]
        = 
        \left[\begin{smallmatrix}
            \MI_d & \MX_n \\
            \MX_n^* & \MX_n^*\MX_n
        \end{smallmatrix}\right]
        \quad \text{such that} \quad 
        \MX_n^*\MX_n = \MI_k
        \quad \text{and} \quad 
        \MX_n \in \stiefel_d(k).
    \end{equation*}
    The other way round,
    if $\MX_n \in \stiefel_d(k)$,
    this representation implies $\rk(\MV_n) = d$ and $\MV \succeq \Mzero$.
\end{proof}

\begin{lemma}
    \label{lem:relax-stiefel}
    $\MV_n \succeq \Mzero$
    if and only if $\lVert \MX_n \rVert_2 \le 1$.
\end{lemma}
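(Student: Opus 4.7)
The plan is to invoke the Schur complement characterization of positive semi-definiteness for block matrices. Since the top-left block $\MI_d$ of $\MV_n$ is positive definite, a standard result (see e.g.\ Boyd--Vandenberghe) tells us that $\MV_n \succeq \Mzero$ is equivalent to the positive semi-definiteness of the Schur complement of $\MI_d$ in $\MV_n$, namely
\begin{equation*}
    \MI_k - \MX_n^* \MI_d^{-1} \MX_n = \MI_k - \MX_n^* \MX_n \succeq \Mzero.
\end{equation*}

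Next, I would translate this spectral condition into a statement about the operator norm of $\MX_n$. The matrix $\MX_n^* \MX_n \in \R^{k \times k}$ is symmetric positive semi-definite, and its eigenvalues are exactly the squared singular values $\sigma_j(\MX_n)^2$ of $\MX_n$. Hence $\MI_k - \MX_n^* \MX_n \succeq \Mzero$ is equivalent to $\sigma_j(\MX_n)^2 \le 1$ for every $j$, which in turn is equivalent to $\sigma_{\max}(\MX_n) = \lVert \MX_n \rVert_2 \le 1$. Combining this equivalence with the Schur complement one above concludes the proof.

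If a self-contained argument is preferred over citing the Schur complement, one can verify both directions directly. For the forward implication, testing $\MV_n$ with block vectors of the form $(\MX_n \Vw, -\Vw)$ for $\Vw \in \R^k$ yields $2 \Vw^*(\MI_k - \MX_n^* \MX_n) \Vw \ge 0$, hence $\lVert \MX_n \rVert_2 \le 1$. For the reverse, given any block vector $(\Vu, \Vw) \in \R^{d+k}$, a direct expansion gives
\begin{equation*}
    (\Vu, \Vw)^* \MV_n (\Vu, \Vw) = \lVert \Vu + \MX_n \Vw \rVert^2 + \Vw^*(\MI_k - \MX_n^* \MX_n) \Vw,
\end{equation*}
which is non-negative as soon as $\MI_k - \MX_n^*\MX_n \succeq \Mzero$. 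I expect no real obstacle here; the only mild subtlety is noting that $\MI_d$ is (strictly) positive definite so that the classical Schur complement criterion applies without additional regularization, and that the spectral norm of a (possibly rectangular) matrix coincides with its largest singular value.
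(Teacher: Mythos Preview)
Your proposal is correct and follows essentially the same route as the paper: the paper also applies the Schur complement of the upper-left block $\MI_d$ to get $\MV_n \succeq \Mzero \Leftrightarrow \MI_k - \MX_n^*\MX_n \succeq \Mzero \Leftrightarrow \lVert \MX_n \rVert_2 \le 1$. One tiny slip in your optional self-contained argument: testing with $(\MX_n\Vw,-\Vw)$ actually gives $\Vw^*(\MI_k - \MX_n^*\MX_n)\Vw$ rather than twice that quantity, but this does not affect the conclusion.
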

\begin{proof}
Applying Schur's theorem \cite[p.~495]{HJ13} 
to $\MV_n$ with respect to the upper left identity $\MI_d$ yields 
\begin{equation*}
    \MV_n \succeq \Mzero
    \quad \Leftrightarrow \quad 
    \MV_n / \MI_d = \MI_k - \MX_n^*\MX_n \succeq \Mzero
    \quad \Leftrightarrow \quad 
    \MX_n^*\MX_n \preceq \MI_k
    \quad \Leftrightarrow \quad 
    \|\MX_n\|_2 \leq 1.
    \tag*{\qedhere}
\end{equation*}
\end{proof}

The non-convex TV model \eqref{eq:tv_stiefel_non_convex}
may now be convexified by encoding the domain in $(\MV_n)_{n \in V}$
via Lemma~\ref{lem:vertice_stiefel},
relaxing the constraints on $\MV_n$ by neglecting the rank,
and applying Lemma~\ref{lem:relax-stiefel}.
This procedure yields the \textbf{convexified TV model}:
\begin{equation}
    \label{eq:tv_stiefel_convex_rewitten}
    \argmin_{\MX \in (\R^{d\times k})^N}
    \quad
    \FK(\MX)
    \quad 
    \text{s.t.}
    \quad 
    \|\MX_n\|_2 \leq 1
    \quad 
    \forall n \in V.
\end{equation}

\paragraph{Solving the Convexified TV-Model}

Basically,
the convex minimization problem \eqref{eq:tv_stiefel_convex_rewitten}
can again be solved using ADMM \cite{bauschke2017},
cf. §~\ref{sec:tv_binary_valued_problem}.
More precisely,
relying on the splitting 
\smash{$\FK(\MX) + \iota_{\sphere_{d,k}^N}(\MU)$ with $\MX - \MU = \Mzero$},
where $\sphere_{d,k}$ denotes the unit ball in $\R^{d \times k}$
with respect to the spectral norm, 
we obtain Algorithm~\ref{alg:2}.

\begin{algorithm}{ADMM to solve \eqref{eq:tv_stiefel_convex_rewitten}.}
    \label{alg:2}
    Choose $\MX^{(0)} = \MU^{(0)} = \MZ^{(0)} = \Mzero \in (\R^{d\times k})^N$, 
    step size $\rho > 0$,
    and regularization parameter $\lambda > 0$
    \\
    \textbf{For} $i \in \N$ \textbf{do:}\\
    \hspace*{0.5cm} 
    $\MX^{(i+1)}
        = \prox_{\FK, \frac{1}{\rho}}(\MU^{(i)} - \MZ^{(i)})
        = \prox_{\TV, \frac{\lambda}{\rho}}(\MU^{(i)} - \MZ^{(i)} + \frac{1}{\rho} \, \MY)$ \\
    \hspace*{0.5cm} 
    $\MU^{(i+1)} 
    = \prox_{\iota_{\sphere_{d,k}^N}, \frac{1}{\rho}}(\MX_n^{(i+1)} + \MZ_n^{(i)})
    = \proj_{\sphere_{d,k}^N}(\MX_n^{(i+1)} + \MZ_n^{(i)})$ \\
    \hspace*{0.5cm} 
    $\MZ^{(i+1)} 
    = \MZ^{(i)} + \MX^{(i+1)} - \MU^{(i+1)}$
\end{algorithm}

\begin{remark}
    It is well known that
    the projection onto $\sphere_{d,k}$ can be computed
    using an eigenvalue decomposition
    and projecting the eigenvalues therein onto $[-1,1]$,
    similar to \cite[Thm.~13]{BeBrSt25}.
    Similarly to §~\ref{sec:tv_binary_valued_problem},
    the TV proximal mapping can be efficiently computed by
    applying the fast TV program \cite{Con13v4} coordinatewise.
    The convergence of Algorithm~\ref{alg:2} is again guaranteed 
    by \cite[Cor.~28.3]{bauschke2017}.
\end{remark}

\subsection{Tikhonov Denoiser for Stiefel-Valued Data}
\label{sec:tik_stiefel_valued_problem}

To restore smooth Stiefel-valued data
$\MX \coloneqq (\MX_n)_{n \in V} \in \stiefel_d^N(k)$
from noisy measurements 
$\MY \coloneqq ( \MY_n)_{n \in V} \in (\R^{d \times k})^N$,
we instead consider the non-convex Tikhonov model:
\begin{equation}
    \label{eq:tik_stiefel_non_convex}
    \argmin_{\MX \in \stiefel_d^N(k)}
    \quad
    \tfrac{1}{2} \sum_{n \in V} \|\MX_n - \MY_n\|^2_{\tF}
    + \tfrac{\lambda}{2} 
    \sum_{n \in V} \|\MX_n - \MX_m\|^2_{\tF}.
\end{equation}
Exploiting that,
for $\MX_n, \MX_m \in \stiefel_d(k)$
and fixed $\MY_n \in \R^{d\times k}$,
$\lVert \MX_n - \MY_n \rVert_{\tF} 
=
- \langle \MX_n, \MY_n \rangle_{\tF} 
+ \text{constant}$
and
$\lVert \MX_n - \MX_m \rVert_{\tF} 
=
- \langle \MX_n, \MX_m \rangle_{\tF} 
+ \text{constant}$,
and introducing the auxiliary variables
$\ML_{(n,m)} \coloneqq \MX_n^* \MX_m$,
we rewrite the objective of \eqref{eq:tik_stiefel_non_convex}
using
\begin{equation*}
    \FL(\MX, \ML)
    \coloneqq 
    - \smashoperator{\sum_{n \in V}} \langle \MX_n, \MY_n \rangle_{\tF}
    - \smashoperator{\sum_{(n,m) \in E}} \langle \ML_{(n,m)}, \Mone_{k}\rangle_{\tF},
\end{equation*}
where $\Mone_{k} \in \R^{k\times k}$ denotes the all-one matrix.
In analogy to \cite{BeBrSt25} for sphere-valued data,
we encode the non-convex domain $\MX_n \in \stiefel_d(k)$
and the non-convex side condition $\ML_{(n,m)} \coloneqq \MX_n^* \MX_m$
using positive semi-definite, fixed-rank matrices.

\begin{lemma}
    \label{lem:stiefel_matrix}
    Let $\MX_n, \MX_m \in \R^{d \times k}$ 
    and
    $\ML_{(n,m)} \in \R^{k \times k}$.
    Then $\MX_n, \MX_m \in \stiefel_d(k)$
    and $\ML_{(n,m)} \coloneqq \MX_n^* \MX_m$
    if and only if
    \begin{align*}
        \MQ_{(n,m)} \coloneqq \left[\begin{smallmatrix}
            \MI_d & \MX_n & \MX_m \\
            \MX_n^* & \MI_k & \ML_{(n,m)} \\
            \MX_m^* & \ML_{(n,m)}^* & \MI_k
        \end{smallmatrix}\right]
        \succeq \Mzero
        \quad \text{and} \quad
        \rk(\MQ_{(n,m)}) = d.
    \end{align*}
\end{lemma}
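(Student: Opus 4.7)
The plan is to proceed exactly as in the proof of Lemma~\ref{lem:vertice_stiefel}, generalizing the block-matrix factorization argument from two to three block rows and using a Schur-complement argument to handle the rank constraint cleanly.

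For the direction ``$\Leftarrow$'', I would assume $\MQ_{(n,m)} \succeq \Mzero$ and $\rk(\MQ_{(n,m)}) = d$. Since the top-left block $\MI_d$ already has rank $d$, the full matrix rank cannot exceed $d$, and equality forces every row of $\MQ_{(n,m)}$ to lie in the row span of the first $d$ rows. Reading off the coefficients from the first block row yields the low-rank factorization
\begin{equation*}
    \MQ_{(n,m)}
    = \left[\begin{smallmatrix} \MI_d \\ \MX_n^* \\ \MX_m^* \end{smallmatrix}\right]
      \left[\begin{smallmatrix} \MI_d & \MX_n & \MX_m \end{smallmatrix}\right]
    = \left[\begin{smallmatrix}
        \MI_d & \MX_n & \MX_m \\
        \MX_n^* & \MX_n^*\MX_n & \MX_n^*\MX_m \\
        \MX_m^* & \MX_m^*\MX_n & \MX_m^*\MX_m
      \end{smallmatrix}\right].
\end{equation*}
Comparing block by block with the definition of $\MQ_{(n,m)}$ immediately gives $\MX_n^*\MX_n = \MI_k$, $\MX_m^*\MX_m = \MI_k$, so $\MX_n, \MX_m \in \stiefel_d(k)$, together with $\ML_{(n,m)} = \MX_n^*\MX_m$, as desired.

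For the direction ``$\Rightarrow$'', I would simply substitute $\MX_n^*\MX_n = \MI_k$, $\MX_m^*\MX_m = \MI_k$, and $\ML_{(n,m)} = \MX_n^*\MX_m$ into the definition of $\MQ_{(n,m)}$. The same outer-product identity then exhibits $\MQ_{(n,m)}$ as $W W^*$ with $W = [\MI_d; \MX_n^*; \MX_m^*]^*$ of rank $d$, proving both $\MQ_{(n,m)} \succeq \Mzero$ and $\rk(\MQ_{(n,m)}) = d$.

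Since the argument hinges on the single observation that $\MI_d$ already saturates the admissible rank, there is no serious obstacle; the only step that requires a touch of care is justifying that the rows below the first $d$ are indeed linear combinations of the first $d$ rows (as opposed to only being in the column span). This follows because $\MQ_{(n,m)}$ is symmetric and positive semi-definite, so its row space coincides with its column space, and any column of rank $d$ with an $\MI_d$ block on top determines the remaining entries uniquely. Alternatively, one could invoke Schur's theorem (as in Lemma~\ref{lem:relax-stiefel}) with respect to the upper-left block $\MI_d$: the Schur complement
\begin{equation*}
    \MQ_{(n,m)} / \MI_d
    = \left[\begin{smallmatrix}
        \MI_k - \MX_n^*\MX_n & \ML_{(n,m)} - \MX_n^*\MX_m \\
        \ML_{(n,m)}^* - \MX_m^*\MX_n & \MI_k - \MX_m^*\MX_m
      \end{smallmatrix}\right]
\end{equation*}
must vanish exactly when $\rk(\MQ_{(n,m)}) = \rk(\MI_d) = d$, yielding the three identities in one stroke.
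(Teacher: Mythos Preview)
Your proposal is correct and matches the paper's own argument exactly: the paper simply remarks that the lemma follows as in Lemma~\ref{lem:vertice_stiefel} from the decomposition $\MQ_{(n,m)} = [\MI_d, \MX_n, \MX_m]^* [\MI_d, \MX_n, \MX_m]$, which is precisely your block factorization. Your added Schur-complement alternative is a valid and slightly cleaner way to extract the three identities, but it is not needed beyond what the paper sketches.
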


The lemma can be proven 
like Lemma~\ref{lem:vertice_stiefel}
by considering the decomposition
$$\MQ_{(n,m)} = [\MI_d, \MX_n, \MX_m]^* [\MI_d, \MX_n, \MX_m].$$
Removing the rank constraint regarding $\MQ_{(n,m)}$,
we propose to solve the \textbf{convexified Tikhonov model}:
\begin{equation}
    \label{eq:tik_stiefel_convex}
    \argmin_{\substack{\MX \in (\R^{d \times k})^N\\ \ML \in (\R^{k \times k})^M}}
    \quad  
    \FL(\MX, \ML)
    \quad
    \text{s.t.}
    \quad 
    \MQ_{(n,m)} \succeq \Mzero
    \quad 
    \forall
    (n,m) \in E.
\end{equation}

\paragraph{Solving the Convexified Tikhonov Model}

To handle the positive semi-definiteness constraints
on $\MQ_{(n,m)}$ jointly,
we introduce the linear operator 
$\FQ \coloneqq (\FQ_{(n,m)})_{(n,m) \in E}$
with 
\begin{equation*}
    \FQ_{(n,m)} : (\R^{d\times k})^N \times (\R^{k\times k})^M \to \R^{(d+2k) \times (d+2k)},
    \quad 
    (\MX, \ML) \mapsto \MQ_{(n,m)} - \MI_{d+2k}
\end{equation*}
and denote the restriction of its adjoint to the argument $\MX$
by $\FQ_{\MX}^*$
as well as
to $\ML$
by $\FQ_{\ML}^*$.
To solve \eqref{eq:tik_stiefel_convex},
we apply the ADMM \cite{bauschke2017} to the splitting
\smash{$\FL(\MX, \ML) + \iota_{\PSD^M_{d+2k}}(\MU)$}
with $\MU = \FQ(\MX, \ML)$
and $\PSD_{d+2k} \coloneqq
\{\MA \in \R^{(d+2k)\times (d+2k)} 
: \MA \succeq - \MI_{d+2k}\}$.
Relying on similar arguments as in the proof of \cite[Thm.~5.1]{BeBrSt25},
a brief direct calculation yields Algorithm~\ref{alg:3},
where the number of connected vertices is defined as
$\nu_n \coloneqq |\{m \in V : (n,m) \in E \vee (m,n) \in E\}|$.

\begin{algorithm}{ADMM to solve \eqref{eq:tik_stiefel_convex}.}
    \label{alg:3}
    Choose $\MX^{(0)} = \Mzero \in (\R^{d\times k})^N$ and $\MU^{(0)} = \MZ^{(0)} = \Mzero \in (\R^{k\times k})^M$, 
    step size $\rho > 0$
    and regularization parameter $\lambda > 0$
    \\
    \textbf{For} $i \in \N$ \textbf{do:}\\
    \hspace*{0.5cm} 
    $\MX_n^{(i+1)}
        = 
        \tfrac{1}{2\nu_n}\bigl[ \FQ^*_{\MX}(\MU^{(i)} - \MZ^{(i)})_n + \frac{1}{\rho} \, \MY_n  \bigr]
        \quad \forall n \in V$
    \\
    \hspace*{0.5cm} 
    $\ML_{(n,m)}^{(i+1)}
        = 
        \tfrac{1}{2} \bigl[\FQ_{\ML}^*(\MU^{(i)} - \MZ^{(i)})_{(n,m)} + \frac{\lambda}{\rho} \, \ME \bigr]
        \quad\forall (n,m) \in E$
    \\
    \hspace*{0.5cm} 
    $\MU^{(i+1)}_{(n,m)} 
    = 
    \proj_{\PSD_{d + 2k}} \bigl(\FQ_{(n,m)}(\MX^{(i+1)}, \ML^{(i+1)}) 
    + \MZ_{(n,m)}^{(i)} \bigr)
    \quad \forall (n,m) \in E$ \\
    \hspace*{0.5cm} 
    $\MZ^{(i+1)} 
    = \MZ^{(i)} + \FQ(\MX^{(i+1)}, \ML^{(i+1)}) - \MU^{(i+1)}$
\end{algorithm}

\begin{remark}
    The projection onto $\PSD_{d+2k}$ can be computed
    using an eigenvalue decomposition
    and projecting the eigenvalues therein onto $[-1,\infty)$.
    The convergence of Algorithm~\ref{alg:3} is again guaranteed 
    by \cite[Cor.~28.3]{bauschke2017}.
\end{remark}

\section{Numerical Results}
\label{sec:num}

All algorithms are implemented\footnote{
The code is available at GitHub \url{https://github.com/JJEWBresch/relaxed_tikhonov_regularization}.} 
in Python~3.11.9 using Numpy~1.25.0 and Scipy~1.11.1.
The following experiments are performed on an off-the-shelf MacBook Pro 2020
with Intel Core i5 Chip (4‑Core CPU, 1.4~GHz) and 8~GB RAM.
In every experiment,
we use a range of regularization parameters 
and finally choose the parameter 
yielding the smallest mean squared error (MSE)
in sense of the restoration.

\paragraph{Multi-Binary-Valued Data}

A real-world application 
for multi-binary data denoising
is the restoration 
of multi-color QR codes
\cite{Ta18}.
Figuratively,
these QR codes are generated
by combining three independent QR codes
in the RBG (red-green-blue) color space.
The modules of the resulting QR codes
are then colored in black, red, green, blue,
cyan, magenta, yellow, and white%
---%
corresponding to the vertices of the RGB color space.
To fit our setting,
we scale and shift the RGB color space
such that
the module colors can be interpreted as $\Binary_3$ data.
For a proof-of-concept,
we employ a $20\times20$, multi-color QR code
that is distorted by 
independent, 3-dimensional Gaussian noise on a 10-times finer grid,
yielding an $\R^3$-valued image.
The ground truth 
and the noisy multi-color QR code are reported in Figure~\ref{fig:binary}
(first, second). 
Applying Algorithm~\ref{alg:1},
we obtain the restored multi-color QR code in
Figure~\ref{fig:binary} (third).
Interestingly,
Algorithm~\ref{alg:1} already yields
a $\Binary_3$-valued restoration;
therefore,
the projection procedure 
outlined in Theorem~\ref{thm:binary_prob_tightness_tv}
is here not required.
Although we here have to deal with severe noise,
the reconstruction is nearly perfect
up to some single pixels.

\begin{figure}
    \includegraphics[width=\linewidth,clip=true, trim=150pt 45pt 150pt 55pt]{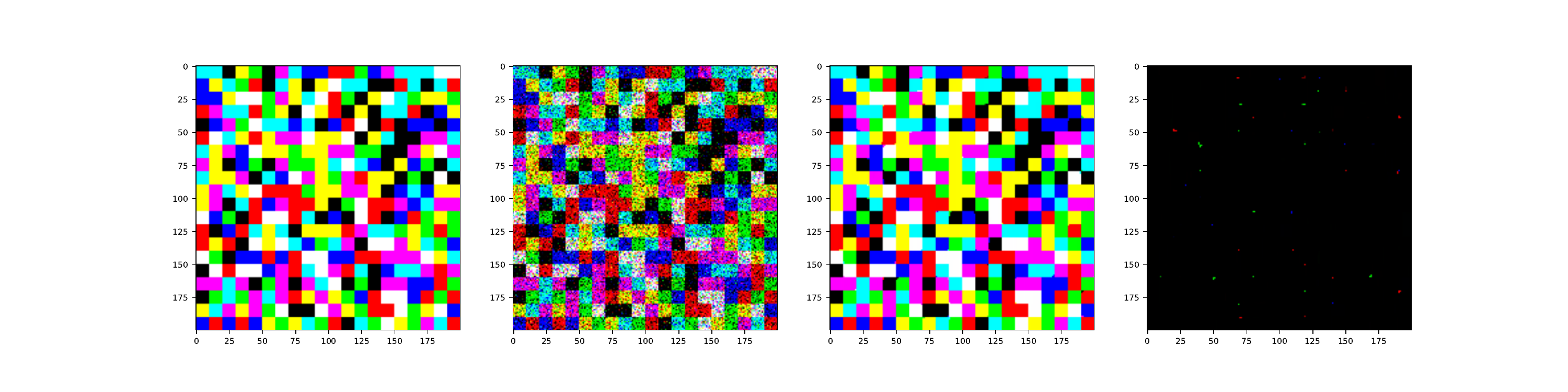}    
    \caption{
    Multi-color QR code denoising
    (from left to right): 
    ground truth (first),
    corrupted image by (3-dimensional) Gaussian noise
    with standard deviation $\sqrt{2}\cdot0.5$ (second),
    restoration using Algorithm~\ref{alg:1}
    with $\lambda = 1.2, \rho=0.1$ (third),
    and the restoration error (forth).
    The mean absolute distance of
    the restored pixels to $\Binary_3$
    is in order of $10^{-5}$,
    i.e., here
    Algorithm~\ref{alg:1} actually yields
    a solution of the non-convex TV model
    \eqref{eq:tv_binary_non_convex}
    without the projection procedure 
    outlined in Theorem~\ref{thm:binary_prob_tightness_tv}.}
    \label{fig:binary}
\end{figure}

\paragraph{Stiefel-Valued Data}
For a proof of concept,
we study the TV model \eqref{eq:tv_stiefel_convex_rewitten}
and the Tikhonov model \eqref{eq:tik_stiefel_convex}
for a synthetic $\stiefel_3(2)$-valued signal of length 200.
Our ground truth is reported in Figure~\ref{fig:stiefel} (left).
The vectors in the orthogonal bases are then distorted 
using the von Mises--Fischer distribution
and are reorthonormalized 
employing the Gram--Schmidt process.
The resulting $\stiefel_3(2)$-valued noisy signal
is reported in Figure~\ref{fig:stiefel} (middle left).
To remove the noise,
we apply our proposed convexified TV model (Algorithm~\ref{alg:2}) 
and Tikhonov model (Algorithm~\ref{alg:3}).
The restored 
$\stiefel_3(2)$-signals 
are reported in Fig.~\ref{fig:stiefel} (middle right, right).
For both models,
we observe convergence to the Stiefel manifold 
in the sense that the norm of basis vectors in $\MX_n^{(i)}$
converges to one 
and that the inner product of the basis vectors 
converges to zero.
Consequently,
both algorithms here actually yield 
minimizers of the non-convex
TV model \eqref{eq:tv_stiefel_non_convex}
and Tikhonov model \eqref{eq:tik_stiefel_non_convex},
respectively.
Especially,
the Tikhonov model nearly recovers the ground truth.

\begin{figure}
\resizebox{\textwidth}{!}{
    \begin{tabular}{c c c c}
        \includegraphics[width=0.48\linewidth, clip=true, trim=140pt 70pt 700pt 70pt]{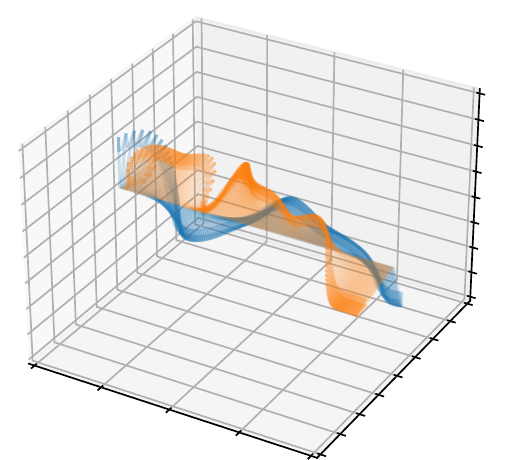}
        \includegraphics[width=0.48\linewidth, clip=true, trim=440pt 70pt 400pt 70pt]{images/grassmannian/grassmannian_TV_signal.pdf}
        \includegraphics[width=0.48\linewidth, clip=true, trim=740pt 70pt 100pt 70pt]{images/grassmannian/grassmannian_TV_signal.pdf}
        \includegraphics[width=0.48\linewidth, clip=true, trim=740pt 70pt 100pt 70pt]{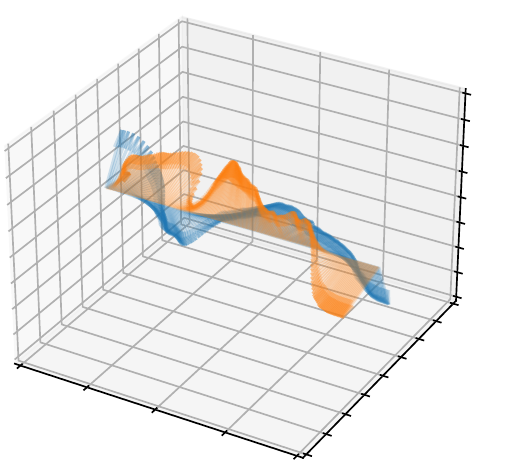}
    \end{tabular}}
    \caption{
    Restoration of a $\stiefel_3(2)$-valued signal (from left to tight):
    ground truth (left),
    noisy measurement (middle left),
    restored signal using Algorithm~\ref{alg:2}
    (convexified TV model)
    with $\lambda=0.75, \rho=0.5$ (middle right),
    and using Algorithm~\ref{alg:3}
    (convexified Tikhonov model)
    with $\lambda=10, \rho=0.1$ (right).
    In both cases 
    the mean absolute distance of the norm 
    of the recovered basis vectors to one is of order $10^{-5}$,
    the mean absolute distance of the inner product
    between the recovered basis vectors to zero 
    is of order $10^{-4}$.}
    \label{fig:stiefel}
\end{figure}

\bibliographystyle{abbrv}
\bibliography{literatur_arXiv}

\end{document}